\crefname{thm}{theorem}{theorems}
\crefname{prop}{proposition}{propositions}
\title[Intersection of finitely generated subalgebras also finitely generated?]{When is the intersection of two finitely generated subalgebras of a polynomial ring also finitely generated?}
\subjclass[2010]{13F20 (primary); 13A18, 16W70, 14M27, 44A60 (secondary)} 
\keywords{  Finite generation;  Compactifications; Moment problem}
\author{Pinaki Mondal}
\address{University of The Bahamas}
\email{pinakio@gmail.com}
\DeclareMathOperator{\charac}{characteristic}
\DeclareMathOperator\gal{Gal} 
\DeclareMathOperator\trdeg{tr.deg} 
\newcommand{\Rmnum}[1]{\expandafter\@slowromancap\romannumeral #1@}
\DeclareMathOperator\proj{Proj}
\DeclareMathOperator\spec{Spec}
\newcommand{\scrB}{\ensuremath{\mathcal{B}}}
\newcommand{\scrG}{\ensuremath{\mathcal{G}}}
\newcommand{\scrP}{\ensuremath{\mathcal{P}}}
\newcommand{\cc}{\ensuremath{\mathbb{C}}}
\newcommand{\GG}{\ensuremath{\mathbb{G}}}
\newcommand{\kk}{\ensuremath{\mathbb{K}}}
\newcommand{\pp}{\ensuremath{\mathbb{P}}}
\newcommand{\rr}{\ensuremath{\mathbb{R}}}
\newcommand{\zz}{\ensuremath{\mathbb{Z}}}
\newcommand{\sheaf}{\ensuremath{\mathcal{O}}}
\newcommand{\nktorus}{(\kk^*)^n}
\newcommand{\dsum}{\ensuremath{\bigoplus}}
\newcommand{\into}{\ensuremath{\hookrightarrow}}
\newtheorem{thm}{Theorem}[section]
\newtheorem*{thm*}{Theorem}
\newtheorem{lemma}[thm]{Lemma}
\newtheorem*{lemma*}{Lemma}
\newtheorem*{prop*}{Proposition}
\newtheorem{cor}[thm]{Corollary}
\newtheorem{claim}[thm]{Claim}
\newtheorem*{claim*}{Claim}
\newtheorem*{conjecture*}{Conjecture}
\theoremstyle{definition}
\newtheorem*{constrinition*}{Construction-Definition}
\newtheorem*{convention*}{Convention}
\newtheorem{defn}[thm]{Definition}
\newtheorem*{defn*}{Definition}
\newtheorem*{definotation*}{Definition-Notation}
\newtheorem{example}[thm]{Example}
\newtheorem*{example*}{Example}
\newtheorem*{fact*}{Fact}
\newtheorem*{facts*}{Facts}
\newtheorem*{bold-note*}{Note}
\newtheorem*{problem*}{Problem}
\newtheorem{bold-question}[thm]{Question}
\newtheorem*{bold-question*}{Question}
\newtheorem{rem}[thm]{Remark}
\newtheorem*{reminition*}{Remark-Definition}
\newtheorem{remexample*}{Remark-Example}
\newtheorem*{remtation*}{Remark-Notation}
\newtheorem{remuestion}[thm]{Remark-Question}
\newtheorem*{remuestion*}{Remark-Question}
\newtheorem*{remvention*}{Remark-Convention}
\theoremstyle{remark}
\newtheorem*{rem*}{Remark}
\newtheorem*{note*}{Note}
\newtheorem*{notation*}{Notation}
\newtheorem*{question*}{Question}
\newtheorem*{questions*}{Questions}
\newcounter{UnorderedProofTempCtr}
\newcommand{\tempcommand}{}
\newcommand{\adelta}{A^\delta}
\newcommand{\kxdelta}{\kk[X]^\delta}
\newcommand{\kxdeltaone}{\kk[X]^{\delta_1}}
\newcommand{\kxdeltatwo}{\kk[X]^{\delta_2}}
\newcommand{\cxydelta}{\cc[x,y]^{\delta}}
\newcommand{\cxydeltai}{\cc[x,y]^{\delta_i}}
\newcommand{\ldt}{\text{l.d.t.}}
\newcommand{\gplus}[1]{g_{+,#1}}
\newcommand{\gminus}[1]{g_{-,#1}}
\newlist{properties}{enumerate}{2}
\setlist[properties,1]{label=(\Alph*), ref=\Alph*}
\setlist[properties,2]{label=(\arabic*), ref=\thepropertiesi(\arabic*)}
\newlist{prooflist}{enumerate}{3}
\setlist[prooflist,1]{label=(\roman*)}
\setlist[prooflist,2]{label=(\arabic*)}
\setlist[prooflist,3]{label=(\alph*)}
\newlist{defnlist}{enumerate}{3}
\setlist[defnlist,1]{label=(\alph*)}
\setlist[defnlist,2]{label=(\arabic*), ref=(\alph{defnlisti}.\arabic*)}
\setlist[defnlist,3]{label=(\roman*), ref=(\alph{defnlisti}.\arabic{defnlistii}.\roman*)}
\crefname{bold-question}{question}{questions}
\begin{document}
\maketitle

\begin{abstract} 
We study two variants of the following question: ``Given two finitely generated $\cc$-subalgebras $R_1, R_2$ of $\cc[x_1, \ldots, x_n]$,  is their intersection also finitely generated?'' We show that the smallest value of $n$ for which there is a counterexample is $2$ in the general case, and $3$ in the case that $R_1$ and $R_2$ are integrally closed. We also explain the relation of this question to the problem of constructing algebraic compactifications of $\cc^n$ and to the moment problem on semialgebraic subsets of $\rr^n$. The counterexample for the general case is a simple modification of a construction of Neena Gupta, whereas the counterexample for the case of integrally closed subalgebras uses the theory of normal analytic compactifications of $\cc^2$ via {\em key forms} of valuations centered at infinity. 
\end{abstract}

\section{Introduction} \label{sec-intro}
\begin{bold-question} \label{original-question}
Take two subrings of $\cc[x_1, \ldots, x_n]$ which are finitely generated as algebras over $\cc$. Is their intersection also finitely generated as a $\cc$-algebra? 
\end{bold-question}

The only answer to \cref{original-question} in published literature (obtained via a MathOverflow enquiry \cite{mathoverflow-intersection}) seems to be a class of counterexamples constructed by Bayer \cite{bayer} for $n \geq 32$ using Nagata's counterexample to Hilbert's fourteenth problem from \cite{nagata-fourteenth-lecture} and Weitzenb\"ock's theorem \cite{weitzenbock} on finite generation of invariant rings. After an earlier version of this article appeared on arXiv, however, Wilberd van der Kallen communicated to me a simple counterexample for $n = 3$:

\begin{example} \label{neena-example}
Let $R_1 := \cc[x^2, x^3, y, z]$ and $R$ be the ring of invariants of the action of the additive group $\GG_a := (\cc,+)$ on $R_1$ given by 
\begin{align}
y \mapsto y + x^3,\ z \mapsto z + x^2 \label{neenaction}
\end{align}
Then a result of Bhatwadekar and Daigle \cite{bhaigle} shows that $R$ is not finitely generated over $\cc$. Neena Gupta communicated this construction to Wilberd van der Kallen as an example of a $\GG_a$-action with non-finitely generated ring of invariants. Van der Kallen noted that if $R_2$ is the ring of invariants of the action defined by \eqref{neenaction} of $\GG_a$ on $\cc[x,y,z]$, then $R_2 = \cc[x,y-zx]$ and $R = R_1 \cap R_2$, so that it serves as a counterexample to \cref{original-question}. Indeed, it is straightforward to see directly that $R = \cc[x^\alpha (y - zx)^\beta: (\alpha,\beta) \in S]$, where 
\begin{align*}
S := \{(\alpha, \beta) \in \zz_{\geq 0}^2:\ \text{either}\ \beta = 0\ \text{or} \ \alpha \geq 2\} 
\end{align*}
is a non-finitely generated sub-semigroup of $\zz^2$. 
\end{example}

A variant of \cref{neena-example} in fact gives a counterexample to \cref{original-question} for $n = 2$: 

\begin{example} \label{2-example}
Let $R_1 := \cc[x^2, x^3, y]$ and $R_2 := \cc[x^2, y - x]$. Then $R := R_1 \cap R_2 = \cc[x^{2\alpha} (y-x)^\beta: (\alpha,\beta) \in S']$, where 
\begin{align*}
S' := \{(\alpha, \beta) \in \zz_{\geq 0}^2:\ \text{either}\ \beta = 0\ \text{or} \ \alpha \geq 1\} 
\end{align*}
is a non-finitely generated sub-semigroup of $\zz^2$. 
\end{example}

Since \cref{original-question} holds for $n = 1$ (see e.g.\ assertion \eqref{dim1} of \cref{the-thm}), \cref{2-example} gives a complete answer to \cref{original-question}. In this article we consider a natural variant of \cref{original-question}: denote the subrings of $\cc[x_1, \ldots, x_n]$ in \cref{original-question} by $R_1, R_2$, and their intersection by $R$. 

\begin{bold-question} \label{normal-question}
If $R_1$ and $R_2$ are finitely generated and {\em integrally closed}\footnote{The {\em integral closure} of a subring $R$ of a ring $S$ is the set of all elements $x \in S$ which satisfies an equation of the form $x^d + \sum_{i=1}^d a_ix^{d-i} = 0$ for some $d > 0$ and $a_1, \ldots, a_d \in R$. A domain is {\em integrally closed} if it itself is its integral closure in its field of fractions.} $\cc$-subalgebras of $\cc[x_1, \ldots, x_n]$, is $R$ also finitely generated?
\end{bold-question}

Note that in each of \cref{neena-example,2-example} the ring $R_1$ is not integrally closed, so that they do not apply to \cref{normal-question}. Our findings are compiled in the following theorem. 

\begin{thm} \label{the-thm}
\mbox{}
\begin{enumerate}
\item \label{dim1} If the Krull dimension of $R$ is one (or less), then the answer to  \cref{original-question} is affirmative. In particular, the answers to  \cref{original-question,normal-question} are affirmative for $n=1$.

\item \label{dim2} If the Krull dimension of $R$ is $2$, then the answer to \cref{normal-question} is affirmative. In particular, the answer to \cref{normal-question} is affirmative for $n = 2$. 

\item \label{dim3} There are counterexamples to \cref{normal-question} for $n \geq 3$. 
\end{enumerate}
\end{thm}

%
%

Assertions \eqref{dim1} and \eqref{dim2} follow in a straightforward manner from results of Zariski \cite{zariski-hilbert} and Schr\"oer \cite{schroe-traction}. Assertion \eqref{dim3} is the main result of this article: the subrings $R_1$ and $R_2$ from our examples are easy to construct, and our proof that they are finitely generated is elementary; however the proof of non-finite generation of $R_1 \cap R_2$ uses the theory of {\em key forms} (introduced in \cite{algebraicity}) of valuations centered at infinity on $\cc^2$. \\



Finite generation of subalgebras of polynomial algebras has been well studied, see e.g.\ \cite{gale-subalgebras,nagata-finite-generation,evyatar-zaks,eakin-sub-pol-rings,nagata-subrings-pol-rings,wajnryb,gilmer-heinzer-intermediate-rings,dutta-onoda} and references therein. One of the classical motivations for these studies has been Hibert's fourteenth problem. Indeed, as we have mentioned earlier, Bayer's counterexamples to \cref{original-question} for $n \geq 32$ were based on Nagata's counterexamples to Hilbert's fourteenth problem. Similarly, the construction of \cref{neena-example} is a special case of a result of Bhatwadekar and Daigle \cite{bhaigle} on the ring of invariants of the additive group $(\cc, +)$. Our interest in \cref{original-question,normal-question} however comes from two other aspects: compactifications of $\cc^n$ and the moment problem on semialgebraic subsets of $\rr^n$ - this is explained in \cref{motivection}. 

\begin{remuestion}
What can be said about \cref{original-question,normal-question} if $\cc$ is replaced by an arbitrary field $K$?  
\begin{itemize}
\item Our proof shows that assertions \eqref{dim1} and \eqref{dim2} of \cref{the-thm} remain true in the general case, and assertion \eqref{dim3} remains true if $p := \charac(K)$ is zero. However, we do not know if assertion \eqref{dim3} is true in the case that $p > 0$ - see \cref{positive-remark}. 
\item \Cref{neena-example,2-example} give counterexamples to \cref{original-question} if $p = 0$. However, if $p > 0$, then the ring $R$ would be finitely generated over $K$. Indeed, then $R$ would contain $(y - zx)^p$ in the case of \cref{neena-example} and it would contain $(y-x)^p$ in the case of \cref{2-example}; it would follow that $R_2$ is integral over $R$ and  therefore $R$ is finitely generated over $K$ (\cref{am}). Bayer's \cite{bayer} construction of counterexamples to \cref{original-question} also requires zero characteristic (because of its dependence on Weitzenb\"ock's theorem). In particular, we do not know of a counterexample to \cref{original-question} in positive characteristics. 
\end{itemize}
\end{remuestion}

\subsection{Organization}
In \cref{motivection} we explain our motivations to study \cref{original-question}. In \cref{12section} we prove assertions \eqref{dim1} and \eqref{dim2} of \cref{the-thm}, and in \cref{countersection} we prove assertion \eqref{dim3}. \Cref{counter-thm} gives the general construction of our counterexamples to \cref{normal-question} for $n = 3$, and \cref{counterexample} contains a simple example. \Cref{key-section} gives an informal introduction to {\em key forms} used in the proof of \cref{counter-thm}, and \cref{integral-section} contains the proof of a technical result used in the proof of \cref{counter-thm}.

\subsection{Acknowledgments}
I would like to thank Pierre Milman - the mathematics of this article was worked out while I was his postdoc at University of Toronto. I would also like to thank Wilberd van der Kallen for providing \cref{neena-example}, and the referees for some suggestions which significantly improved the quality of the exposition of this article. The first version of this article has been written up during the stay at the Weizmann Institute as an Azrieli Fellow, and the later versions at the University of the Bahamas. 

\section{Motivation} \label{motivection}
\subsection{Compactifications of affine varieties} \label{degree-like-section}
Our original motivation to study \cref{original-question} comes from construction of projective compactifications of $\cc^n$ via {\em degree-like functions}. More precisely, given an affine variety $X$ over a field $\kk$, a {\em degree-like function} on the ring $\kk[X]$ of regular functions on $X$ is a map $\delta: \kk[X] \to \zz \cup \{-\infty\}$ which satisfies the following properties satisfied by the degree of polynomials:
\begin{prooflist}
\item $\delta(\kk) = 0$,
\item \label{mult-property} $\delta(fg) \leq \delta(f) + \delta(g)$,
\item $\delta(f+g) \leq \max\{\delta(f), \delta(g)\}$.
\end{prooflist} 
The graded ring associated with $\delta$ is
\begin{align}
\kxdelta := \dsum_{d \geq 0} \{f \in \kk[X]: \delta(f) \leq d\} \cong \sum_{d \geq 0} \{f \in \kk[X]: \delta(f) \leq d\}t^d \subseteq \kk[X][t] \label{kxdelta}
\end{align}
where $t$ is an indeterminate. If $\delta$ satisfies the following properties:
\begin{prooflist}[resume]
\item \label{positive} $\delta(f) > 0$ for all non-constant $f$, and 
\item \label{finitely-generated-degree} $\kxdelta$ is a finitely generated $\kk$-algebra,
\end{prooflist}
then $\bar X^\delta := \proj \kxdelta$ is a {\em projective completion} of $X$, i.e.\ $\bar X^\delta$ is a projective (and therefore, {\em complete}) variety that contains $X$ as a dense open subset (see e.g.\ \cite[Proposition 2.5]{sub1}). It is therefore a fundamental problem in this theory to determine if $\kxdelta$ is finitely generated for a given $\delta$. \\

It is straightforward to check that the maximum of finitely many degree-like functions is also a degree-like function, and taking the maximum is one of the basic ways to construct new degree-like functions (see e.g.\ \cite[Theorem 4.1]{sub1}). For example, an $n$-dimensional convex polytope $\scrP \subset \rr^n$ with integral vertices and containing the origin in its interior determines a degree-like function on $\kk[x_1, x_1^{-1}, \ldots, x_n, x_n^{-1}]$ defined as follows: 
$$\delta_\scrP(\sum a_\alpha x^\alpha) := \inf\{d\in \zz: d \geq 0,\ \alpha \in d\scrP\ \text{for all}\ \alpha \in \zz^n\ \text{such that}\ a_\alpha \neq 0\}$$ 
It is straightforward to see that $\delta_\scrP$ satisfies properties \ref{positive} and \ref{finitely-generated-degree}, so that it determines a projective completion $X_\scrP$ of the torus $\nktorus$. It turns out that $X_\scrP$ is precisely the {\em toric variety} corresponding to $\scrP$. Moreover, $\delta_\scrP$ is the maximum of some other `simpler' degree-like functions determined by facets of $\scrP$ - see \cref{sub-picture} for an example.\\

\begin{figure}[h]
\begin{center}
\begin{tikzpicture}[scale=0.6]
	\begin{scope}[shift={(0,0)}]
		\draw [gray,  line width=0pt] (-1.5,-1.5) grid (2.5,2.5);
		\draw [<->] (0,2.5) node (yaxis) [above] {$y$}
       	 |- (2.5,0) node (xaxis) [right] {$x$};
       	 
       	\draw[green,thick ] (-1,-1) -- (2,-1) -- (-1,2) -- cycle;
       	\fill[red] (0,0) circle (4pt);
       	
       	\draw (-0.5,-0.5) node {\textcolor{green}{$\scrP$}};
		\draw (-2.5,-1) node [below right, text width= 4.5cm] {
			\scriptsize{
 			\begin{align*}
 			\delta_{\scrP}: &~ x \mapsto 1, x^{-1} \mapsto 1, y \mapsto 1 \\
 							&~ y^{-1} \mapsto 1, x^{-1}y^{-1} \mapsto 1 \\
 							&~ x^2y^{-1} \mapsto 1, x^{-1}y^2 \mapsto 1
 			\end{align*}
 			}
		};    	

	\end{scope}
	
	\begin{scope}[shift={(7,0)}]
		\draw [gray,  line width=0pt] (-1.5,-1.5) grid (2.5,2.5);
		\draw [<->] (0,2.5) node (yaxis) [above] {$y$}
       	 |- (2.5,0) node (xaxis) [right] {$x$};
       	 
       	 \fill[red] (0,0) circle (4pt);
       	 
       	\draw[green,thick ] (-1,-1) -- (2,-1);

		\draw [blue, thick, ->] (0.5,-1) -- (0.5,-1.5);
		
		\draw (-2.5,-1) node [below right, text width= 4cm] {
			\scriptsize{
 			\begin{align*}
 				\delta_1(x^\alpha y^\beta) 	&= (\alpha, \beta) \cdot (0,-1) \\
 											&= -\beta
 			\end{align*}
 			}
		};   
	\end{scope}
	
	\begin{scope}[shift={(13,0)}]
		\draw [gray,  line width=0pt] (-1.5,-1.5) grid (2.5,2.5);
		\draw [<->] (0,2.5) node (yaxis) [above] {$y$}
       	 |- (2.5,0) node (xaxis) [right] {$x$};
       	 
       	 \fill[red] (0,0) circle (4pt);
       	 
       	\draw[green,thick ] (2,-1) coordinate (p1) -- (-1,2) coordinate (p2);
       	\coordinate (m) at ($(p1)!0.5!(p2)$);
		\draw [blue, thick, ->] (m) -- ($(m)!0.5cm!-90:(p2)$);
		
		\draw (-2.5,-1) node [below right, text width= 4cm] {
			\scriptsize{
 			\begin{align*}
 				\delta_2(x^\alpha y^\beta) 	&= (\alpha, \beta) \cdot (1,1) \\
 											&= \alpha + \beta
 			\end{align*}
 			}
		};

	\end{scope}
	
	\begin{scope}[shift={(19,0)}]
		\draw [gray,  line width=0pt] (-1.5,-1.5) grid (2.5,2.5);
		\draw [<->] (0,2.5) node (yaxis) [above] {$y$}
       	 |- (2.5,0) node (xaxis) [right] {$x$};
       	 
       	 \fill[red] (0,0) circle (4pt);
       	 
       	\draw[green,thick ] (-1,-1) -- (-1,2);

		\draw [blue, thick, ->] (-1,0.5) -- (-1.5,0.5);
		
		\draw (-2.5,-1) node [below right, text width= 4cm] {
			\scriptsize{
 			\begin{align*}
 				\delta_3(x^\alpha y^\beta) 	&= (\alpha, \beta) \cdot (-1,0) \\
 											&= -\alpha
 			\end{align*}
 			}
		};   
	\end{scope}
\end{tikzpicture}
\caption{$\delta_\scrP = \max\{\delta_1, \delta_2, \delta_3\}$}  \label{sub-picture}
\end{center}
\end{figure}

The preceding discussion suggests that the following is a fundamental question in the theory of degree-like functions:

\begin{bold-question} \label{the-degree-like-question}
Let $\delta := \max\{\delta_1, \delta_2\}$. If $\kxdeltaone$ and $\kxdeltatwo$ are finitely generated algebras over $\kk$, is $\kxdelta$ also finitely generated over $\kk$?
\end{bold-question}
In the scenario of \cref{the-degree-like-question}, identifying $\kxdeltaone$ and $\kxdeltatwo$ with subrings of $\kk[X][t]$ as in \eqref{kxdelta} implies that $\kxdelta = \kxdeltaone \cap \kxdeltatwo$. Consequently, in the case that $\kk = \cc$ and $X$ is the affine space $\cc^n$, \cref{the-degree-like-question} is a special case of \cref{original-question}, and
our counterexamples to \cref{normal-question} are in fact counterexamples to this special case with $X = \cc^2$.
%

\subsection{Moment problem}
Given a closed subset $S$ of $\rr^n$, the {\em $S$-moment problem} asks for characterization of linear functionals $L$ on $\rr[x_1, \ldots, x_n]$ such that $L(f)= \int_S f\, \mathrm{d} \mu$ for some (positive Borel) measure $\mu$ on $S$. Classically the moment problem was considered on the real line ($n = 1$): given a linear functional $L$ on $\rr[x]$, a necessary and sufficient condition for $L$ to be induced by a positive Borel measure on $S \subseteq \rr$ was shown to be  
\begin{itemize}
\item $L(f^2 + xg^2) \geq 0$ for all $f,g \in \rr[x]$ in the case that $S = [0, \infty)$ (Stieltjes \cite{stieltjes});
\item $L(f^2) \geq 0$ for all $f\in \rr[x]$ in the case that $S = \rr$ (Hamburger \cite{hamburgerIII});
\item $L(f^2 + xg^2 + (1-x)h^2) \geq 0$ for all $f,g,h \in \rr[x]$ in the case that $S = [-1,1]$ (Hausdorff \cite{hausdorffI}).
\end{itemize}
In the general case Haviland \cite{haviland} showed that $L$ is induced by a positive Borel measure on $S$ iff $L(f) \geq 0$ for every polynomial $f$ which is non-negative on $S$. 
Since sums of squares of polynomials are obvious examples of non-negative on $S$, Haviland's theorem motivates the following definition. 

\begin{defn}[{Powers and Scheiderer \cite{powers-scheiderer}}]
Given a closed subset $S$ of $\rr^n$ and a subset $\scrP$ of $\rr[x_1, \ldots, x_n]$, we say that $\scrP$ {\em solves the $S$-moment problem} if for every linear functional $L$ on $\rr[x_1, \ldots, x_n]$, $L$ is induced by a positive Borel measure on $S$ iff $L(g^2f_1 \cdots f_r) \geq 0$ for every $g \in \rr[x_1, \ldots, x_n]$, $f_1, \ldots, f_r \in \scrP$, $r \geq 0$.
\end{defn}

In particular, the classical examples show that $\emptyset$, $\{x\}$, $\{x, 1 - x\}$ solves the moment problem respectively for $\rr$, $[0, \infty)$, $[0, 1]$. In the case that $S$ is a {\em basic semialgebraic} set, i.e.\ $S$ is defined by finitely many polynomial inequalities $f_ 1 \geq 0, \ldots, f_s \geq 0$, Schm\"udgen \cite{schmudgen} proved that $\{f_1, \ldots, f_s\}$ solves the $S$-moment problem provided $S$ is compact. On the other hand, if $S$ is non-compact, then it may happen that no finite set of polynomials solves the moment problem for $S$ (see e.g.\ \cite{kuhlmann-marshall,powers-scheiderer}). Netzer associated (see e.g.\ \cite[Section 1]{with-tim}) a natural {\em filtration} $\{\scrB_d(S): d \geq 0\}$ on the polynomial ring determined by $S$: 
\begin{align*}
\scrB_d(S) := \{f \in \rr[x_1, \ldots, x_n]: f^2 \leq g\ \text{on}\ S\ \text{for some}\ g \in 	\rr[x_1, \ldots, x_n],\ \deg(g) \leq 2d\}
\end{align*}
In other words, $\scrB_d(S)$ is the set of all polynomials which `grow on $S$ as if they were of degree at most $d$'. The graded algebra corresponding to the filtration is 
\begin{align*}
\scrB(S) := \dsum_{d \geq 0} \scrB_d(S) \cong \sum_{d \geq 0} \scrB_d(S)t^d \subseteq \rr[x_1, \ldots, x_n, t]
\end{align*}
where $t$ is a new indeterminate. 

\begin{thm}[\cite{scheiderer}, Netzer's formulation (appeared in \cite{with-tim})] \label{scheiderer}
If $\scrB_0(S) = \rr$ and $\scrB_d(S)$ is finite dimensional for every $d \geq 0$, then the $S$-moment problem is not solvable. In particular, if $\scrB_0(S) = \rr$ and $\scrB(S)$ is finitely generated as an $\rr$-algebra, then the $S$-moment problem is not solvable. 
\end{thm}

It is straightforward to produce open semialgebraic sets $S$ which satisfies the assumption of \cref{scheiderer}. E.g.\ a {\it standard tentacle} is a set 
$$\left\{ (\lambda^{\omega_1}b_1,\ldots,\lambda^{\omega_n}b_n)\mid \lambda \in \rr, \lambda \geq 1, b\in B\right\}$$ 
where $\omega := (\omega_1, \ldots, \omega_n) \in \zz^n$ and $B\subseteq (\rr\setminus\{0\})^n$ is a compact semialgebraic set with nonempty interior; we call $\omega$ the {\em weight vector} corresponding to the tentacle. If $S$ is a finite union of standard tentacles with weights $\omega_1, \ldots, \omega_k \in \zz^n$, then it is not too hard to see that
\begin{itemize}
\item  $\scrB_0(S) = \rr$ iff the cone $\{\lambda_1 \omega_1 + \cdots + \lambda_k \omega_k : \lambda_1, \ldots, \lambda_k \geq 0\}$ is all of $\rr^n$, and
\item $\scrB(S)$ is finitely generated over $\rr$. 
\end{itemize}
In fact all early examples seemed to suggest that $\scrB(S)$ was finitely generated whenever $\scrB_0(S) = \rr$, at least for {\it regular} semialgebraic sets, i.e.\ sets that are closures of open sets, and it had been asked whether this was indeed the case. In \cite{with-tim} this question had been answered in the negative.
Our construction in \cref{countersection} provides the basis of a particular class of examples in \cite{with-tim} consisting of unions of pairs of (non-standard) tentacles. We now describe the construction. We suggest the reader go over \cref{construction} at this point. \\

\begin{figure}[htp]
\centering
\includegraphics[height=6cm]{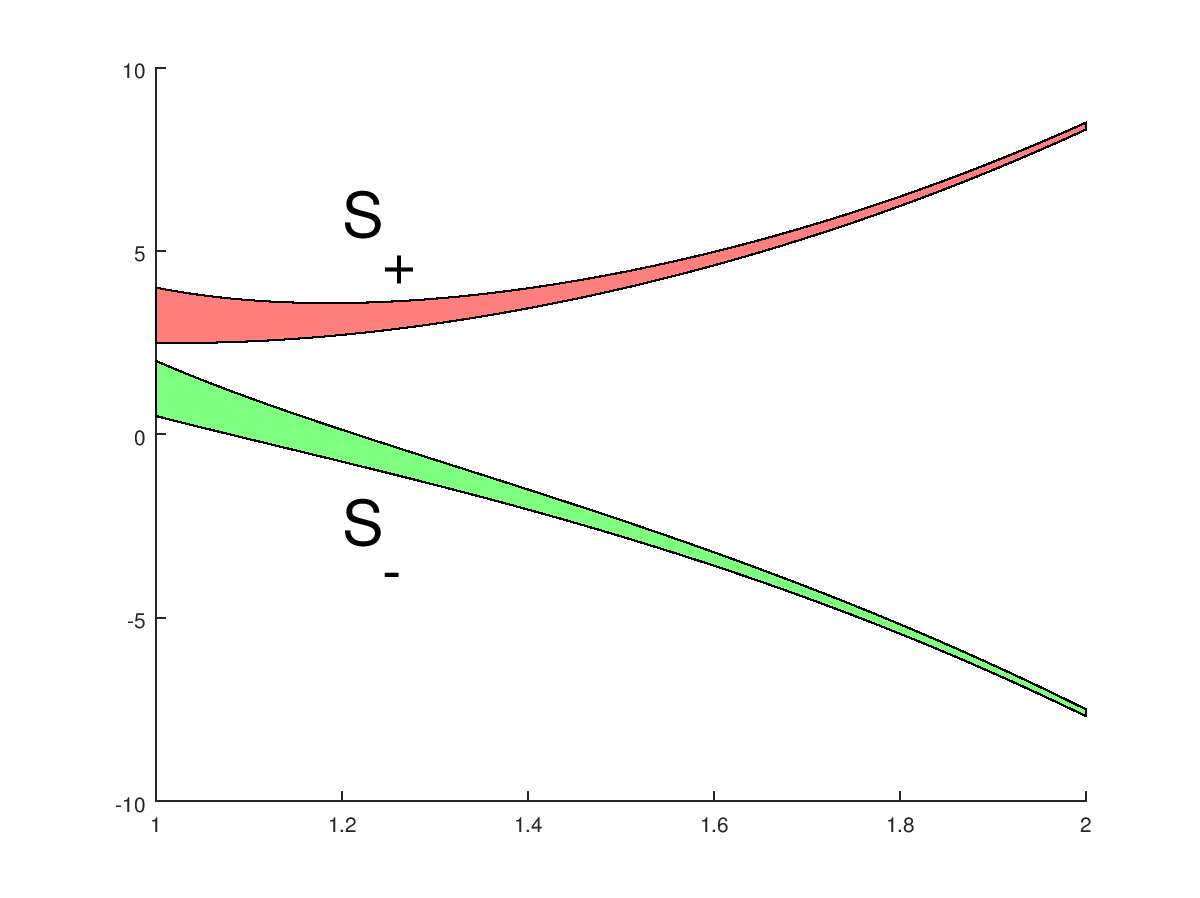}
   
\caption[]{$S = S_+ \cup S_-$, where $S_+ = \{(x,y) \in \rr^2: x \geq 1,\ 0.5 \leq x^3(y - x^3 - x^{-2}) \leq 2\}$ and $S_- = \{(x,y) \in \rr^2: x \geq 1,\ 0.5 \leq x^3(y + x^3 - x^{-2}) \leq 2\}$}
\label{fig:tim}
\end{figure}

Let $p, q_1, \ldots, q_k, \omega_1, \omega_2$ be as in conditions \eqref{odd-property}--\eqref{omega-property} of \cref{construction}. Pick nonzero $a_1, \ldots, a_k \in \rr$ and define $f_+(x), f_-(x)$ as in \eqref{f_+} and \eqref{f_-}. Note that as opposed to \cref{construction}, here $f_+(x)$ and $f_-(x)$ are polynomials over {\em real numbers}. For each $i \in \{+,-\}$, pick positive real numbers $c_{i,1} < c_{i,2}$ and define
\begin{align*} 
S_i := \{(x,y) \in \rr^2: x \geq 1,\ c_{i,1} \leq x^{\omega_2/\omega_1} ( y - f_i(x)) \leq c_{i,2}\}
\end{align*}
Let $\delta_+, \delta_-, R_+, R_-$ be as in \cref{construction}. For $i \in \{+,-\}$, \cite[Lemma 4.3]{with-tim} implies that $f(x,y) \in \scrB_d(S_i)$ iff $\delta_i(f) \leq p\omega_1d$. It follows that the map $\phi: t \mapsto t^{p\omega_1}$ maps $\scrB(S_i) \into R_i$. It is straightforward to check that $R_+, R_ -, R_+ \cap R_-$ are integral over $\scrB(S_+), \scrB(S_-), \scrB(S_+) \cap \scrB(S_-)$ respectively. \Cref{am,counter-thm} then imply that $\scrB(S_+)$ and $\scrB(S_-)$ are finitely generated over $\rr$, but $\scrB(S_+ \cup S_-) = \scrB(S_-) \cap \scrB(S_+)$ is not, even though $\scrB_0(S_+ \cup S_-) = \rr$. \Cref{fig:tim} depicts a pair of $S_+$ and $S_-$ corresponding to \cref{counterexample}. 

%
%
%
%
%
%
%
%


\section{Positive results in dimension at most two} \label{12section}
In this section we prove assertions \eqref{dim1} and \eqref{dim2} of \cref{the-thm}. The proof remains valid if $\cc$ is replaced by an arbitrary algebraically closed field. Moreover, if $k$ is a field with algebraic closure $\bar k$, then a subring $R$ of $k[x_1, \ldots, x_n]$ is finitely generated over $k$ iff $R \otimes_k \bar k$ is finitely generated over $\bar k$; this, together with the preceding sentence, implies that assertions \eqref{dim1} and \eqref{dim2} of \cref{the-thm} remain true if $\cc$ is replaced by an arbitrary field. We use the following results in this section.

\begin{lemma}[{\cite[Corollary 5.22]{am}}] \label{am0}
Let $A$ be a subring of a field $K$. Then the integral closure of $A$ in $K$ is the intersection of all valuation rings in $K$ containing $A$.
\end{lemma}

\begin{lemma}[{\cite[Proposition 7.8]{am}}] \label{am}
Let $A \subseteq B \subseteq C$ be rings such that $A$ is Noetherian, $C$ is finitely generated as an $A$-algebra, and $C$ is integral over $B$. Then $B$ is finitely generated as an $A$-algebra. 
\end{lemma}

\begin{thm}[{\cite{zariski-hilbert}}] \label{zl}
Let $L$ be a field of transcendence degree at most two over a field $k$ and $R$ be an integrally closed domain which is finitely generated as a $k$- algebra. Then $L \cap R$ is a finitely generated $k$-algebra.
\end{thm}

\begin{thm}[{\cite[Corollary 6.3]{schroe-traction}}] \label{schroer}
Let $U$ be a (not necessarily proper) surface (i.e.\ $2$-dimensional irreducible separated scheme of finite type) over a field $k$. Assume $U$ is normal. Then $\Gamma(U,  \sheaf_U)$ is a finitely generated $k$-algebra of dimension $2$ or less. 
\end{thm}

Recall the notation from \cref{the-thm}. In this section we write $L$ for the field of fractions of $R$ and $\bar R_j$ for the integral closure of $R_j$ in its field of fractions, $j = 1, 2$. Moreover, we write  $R'_j := \bar R_j \cap L$, $j = 1,2$. 

\subsection{Proof of  assertion \eqref{dim1} of \cref{the-thm}}
Assume w.l.o.g.\ $\trdeg_\cc(L) = 1$. \Cref{zl} implies that $R'_1$ is finitely generated as a $\cc$-algebra. Let $C$ be the unique non-singular projective curve over $\cc$ such that the field of rational functions on $C$ is $L$. Then $C'_1  := \spec R'_1$ is isomorphic to $C \setminus \{x_1, \ldots, x_k\}$ for finitely many points $x_1, \ldots, x_k \in C$. Then the local rings $\sheaf_{C,x_j}$ of $C$ at $x_j$'s are the only one dimensional valuation rings of $L$ not containing $R'_1$. Let $\bar R$ be the integral closure of $R$ in $L$. Since $\bar R \subseteq R'_1$, \cref{am0} implies that
$$\bar R = R'_1 \cap \sheaf_{C,x_{j_1}} \cap \cdots \cap \sheaf_{C,x_{j_s}}$$
for some $j_1, \ldots, j_s \in \{1, \ldots, k\}$. Then $\bar R$ is the ring of regular functions on $C \setminus \{x_j: j\not\in \{j_1, \ldots, j_s\}\}$, and is therefore finitely generated over $\cc$. \Cref{am} then implies that $R$ is finitely generated over $\cc$. 
\qed 

\subsection{Proof of  assertion \eqref{dim2} of \cref{the-thm}}
Let $L$ be the field of fraction of $R$. Due to assertion \eqref{dim1} we may assume $\trdeg_\cc(L) = 2$. \Cref{zl} implies that $R'_1$ and $R'_2$ are finitely generated over $\cc$. Let $X_i := \spec R'_i$ and $\bar X_i$ be a projective compactification of $X_i$, $i = 1,2$. Let $\bar X$ be the closure in $\bar X_1 \times \bar X_2$ of the graph of the birational correspondence $X_1 \dashrightarrow X_2$ induced by the identification of their fields of rational functions, and $\tilde X$ be the normalization of $\bar X$. For each $i$, let $\pi_i: \tilde X \to \bar X_i$ be the natural projection and set $U_i := \pi_i^{-1}(X_i)$. 
\begin{claim} \label{R'_i-claim}
$R'_i = \Gamma(U_i, \sheaf_{\tilde X})$.
\end{claim}
\begin{proof}
Clearly $R'_i \subseteq \Gamma(U_i, \sheaf_{\tilde X})$. For the other inclusion, pick $f \in \Gamma(U_i, \sheaf_{\tilde X})$. Since $R'_i$ is integrally closed, it suffices to show that $f$ is bounded near every point of $X_i$. Indeed, if $x \in X_i$, then $f$ is regular on $\pi_i^{-1}(x)$, and is therefore constant on all positive dimensional connected components of $\pi_i^{-1}(x)$.
\end{proof}

The assumption that $R_i$'s are integrally closed together with \cref{R'_i-claim,schroer} imply that $R = R'_1 \cap R'_2 = \Gamma(U_1 \cup U_2, \sheaf_{\tilde X})$ is finitely generated over $\cc$, as required. 
\qed

\section{Counterexamples in dimension three}  \label{countersection}
In this section we prove assertion \eqref{dim3} of \cref{the-thm}. In \cref{construction} we describe the construction of counterexamples to \cref{normal-question} for $n =3$, and in \cref{proof1section,proof2section} we prove that these satisfy the required properties. 

\subsection{Construction of the counterexamples} \label{construction}
Let $p, q_1, \ldots, q_k$ be integers such that 
\begin{properties}
\item \label{odd-property} $p$ is an odd integer $\geq 3$, 
\item $0 \leq q_1 < q_2 < \cdots < q_k < p$, 
\item \label{even} there exists $j$ such that $q_j$ is positive and even,
\end{properties}
and let $\omega_1,\omega_2$ be relatively prime positive integers such that 
\begin{properties}[resume]
\item \label{omega-property} $p \geq \omega_2/\omega_1 > q_k$. 
\end{properties}
%
%
Pick nonzero $a_1, \ldots, a_k \in \cc$ and set
\begin{align}
f_+(x) &:=x^p +  \sum_{j=1}^k a_j x^{-q_j} \label{f_+}\\
f_-(x) &:= f_+(-x) = - x^p +  \sum_{j=1}^k (-1)^{q_j} a_j  x^{-q_j}  \label{f_-}
\end{align}
For each $i\in \{+,-\}$, let $y_i := y - f_i(x)$ and $\delta_i$ be (the restriction to $\cc[x,y]$ of) the {\em weighted degree}\footnote{See \cref{weighted-section} for a discussion of weighted degrees.} on $\cc(x,y) = \cc(x,y_i)$ corresponding to weights $\omega_1$ for $x$ and $-\omega_2$ for $y_i$, and 
\begin{align}
R_i := \cxydeltai = \sum_{d \geq 0} \{g \in \cc[x,y]: \delta_i(g) \leq d\}t^d \subseteq \cc[x,y,t]
\end{align}

Assertion \eqref{dim3} of \cref{the-thm} follows from \cref{counter-thm} below.

\begin{thm} \label{counter-thm}
\mbox{}
\begin{enumerate}
\item \label{finite-type-assertion} $R_+$ and $R_-$ are finitely generated integrally closed $\cc$-algebras. 
\item \label{infinite-type-assertion}  $R_+ \cap R_-$ is not finitely generated over $\cc$. 
\end{enumerate}
Let $\Delta_d := \{f \in \cc[x,y]: \delta_i(f) \leq d, \ i = 1, 2\} $, so that $R_+ \cap R_- = \sum_{d \geq 0} \Delta_d t^d$. Then 
\begin{enumerate}
\setcounter{enumi}{2}
\item \label{zero-assertion} $\Delta_0 = \cc$.
\item \label{finite-assertion} If $\omega_2/\omega_1 < p$, then each $\Delta_d$ is finite dimensional (as a vector space) over $\cc$. 
\item \label{infinite-assertion} If $\omega_2/\omega_1 = p$, then there exists $d > 0$ such that $\Delta_d$ is infinite dimensional (as a vector space) over $\cc$.
\end{enumerate}
\end{thm}

\begin{example} \label{counterexample}
Take $f_+ = x^3 + x^{-2}$ and $\omega_2/\omega_1 = 3$. Then $f_- = -x^3 + x^{-2}$. Let 
\begin{align*}
\gplus{0} &:= y - x^3,\ 			& \gminus{0} &:= y+x^3 \\
\gplus{1} &:= x^2(y-x^3)		& \gminus{1} &:= x^2(y+x^3) 
\end{align*}
Let $\scrG$ be a (finite) set of generators of the subsemigroup 
$$\{(\alpha, \beta_0, \beta_1, d) \in (\zz_{\geq 0})^4: \alpha -2\beta_0 - \beta_1 \leq d\}$$
of $\zz^4$. \Cref{deltaplus-cor} below shows that 
\begin{align*}
R_+ &= \cc[x^\alpha \gplus{0}^{\beta_0} \gplus{1}^{\beta_1}t^d : (\alpha, \beta_0, \beta_1, d)  \in \scrG]\\
R_- &= \cc[x^\alpha \gminus{0}^{\beta_0} \gminus{1}^{\beta_1}t^d : (\alpha, \beta_0, \beta_1, d)  \in \scrG]
\end{align*}
On the other hand assertions \eqref{zero-assertion} and \eqref{infinite-assertion} of \cref{counter-thm} imply that $\Delta_0 = \cc$ but $\Delta_d$ is infinite dimensional over $\cc$ for some $d \geq 1$; in particular, $R_+ \cap R_-$ is not finitely generated. 
\end{example}

\begin{rem} \label{positive-remark}
Our proof of \cref{counter-thm} remains correct if $\cc$ is replaced by an algebraically closed field $\kk$ of characteristic zero. However, if $\kk$ has positive characteristic, we can only say the following:
\begin{defnlist}
\item $R_+$ and $R_-$ remain finitely generated integrally closed $\kk$-algebras (our proof for assertion \eqref{finite-type-assertion} of \cref{counter-thm} remains valid);
\item \label{positive-finite} if $\omega_2/\omega_1 < p$, then $\Delta_0 = \kk$ and each $\Delta_d$ is a finite dimensional vector space over $\kk$ (in the case $\omega_2/\omega_1 < p$, assertions \eqref{zero-assertion} and \eqref{finite-assertion} of \cref{counter-thm} are essentially consequences of \cite[theorem 1.4]{sub2-1}, which in turn is a consequence of computations of intersection numbers of curves at infinity on certain {\em completions} (i.e.\ compactifications in the analytic topology) of $\cc^2$; the intersection numbers remain unchanged if $\cc$ is replaced by an arbitrary algebraically closed field $\kk$).
\item if $\omega_2/\omega_1 < p$ and $a_1, \ldots, a_k$ are contained in the algebraic closure of a finite field, then $R_+ \cap R_-$ is finitely generated over $\kk$ (this is a consequence of the `explanation' in parentheses of assertion \ref{positive-finite} and Artin's result (see e.g.\ \cite[Theorem 14.21]{badescu}) that every two dimensional algebraic space over algebraic closures of finite fields are quasi-projective surfaces). In particular, in this case our construction does {\em not} produce a counterexample to \cref{normal-question}.
\item In the remaining cases we do not know if any of assertions \eqref{infinite-type-assertion}--\eqref{infinite-assertion} of \cref{counter-thm} is true (since our main tool, namely \cite[Theorem 4.1]{algebraicity}, does not apply). 
%
%
%
%
%
%
\end{defnlist}
\end{rem}

\newcommand{\Rplus}{R_+}
\newcommand{\deltaplus}{\delta_+}
\newcommand{\omegaplus}{\omega_+}
\newcommand{\omegapluss}[1]{\omega_{+,#1}}
\newcommand{\piplus}{\pi_+}
\newcommand{\zplus}[1]{z_{+,#1}}

\subsection{Proof of assertion \eqref{finite-type-assertion} of \cref{counter-thm}.} \label{proof1section}
We prove assertion \eqref{finite-type-assertion} of \cref{counter-thm} only for $R_+$, since the statement for $R_-$ follows upon replacing each $a_j$ to $(-1)^{q_j}a_j$. \\

The fact that $R_+$ is integrally closed follows from the observation that $\delta_+(g^k) = k\delta_+(g)$ for each $g \in \cc[x,y]$ and $k \geq 0$, i.e.\ $\delta_+$ is a {\em subdegree} in the terminology of \cite{pisis} (see e.g.\ \cite[Proposition 2.2.7]{pisis}). We give a proof here for the sake of completeness.

\begin{lemma} \label{integrally-closed}
Let $\kk$ be a field and $\eta$ be a degree-like function on a $\kk$-algebra $A$ such that $\eta(g^k) = k\eta(g)$ for each $g \in A$ and $k \geq 0$. If $A$ is an integrally closed domain, then so is $A^\eta$. 
\end{lemma}

\begin{proof}
Let $t$ be an indeterminate. Identify $A^\eta$ with a subring of $A[t]$ as in \eqref{kxdelta}. Then the field of fractions of $A^\eta$ is $K(t)$ where $K$ is the field of fractions of $A$. Let $h \in K(t)$ be integral over $A^\eta$. Since the degree in $t$ gives $A^\eta$ the structure of a graded ring, and since $A[t]$ is an integrally closed overring of $A^\eta$, we may w.l.o.g.\ assume that $h = h't^d$ for some $h' \in A$ and $d \geq 0$. Consider an integral equation of $h$ over $A^\eta$: 
\begin{align*}
(h't^d)^k + \sum_{j = 1}^k f_jt^{i_j} (h't^d)^{k-j} = 0
\end{align*}
where $f_jt^{i_j} \in A^\eta$ for each $j$. Consequently we may assume that $i_j = dj$ for each $j = 1, \ldots, k$ such that $f_j \neq 0$, and therefore 
\begin{align*}
h'^k = - \sum_{j = 1}^k f_jh'^{k-j}
\end{align*} 
It follows that 
\begin{align*}
k\eta(h')
	= \eta(h'^k)  
	&\leq \max\{\eta(f_jh'^{k-j}): 1 \leq j \leq k\} \\
	&\leq\{\eta(h'^{k-j}) + \eta(f_j): 1 \leq j \leq k\} \\
	&\leq \max\{(k-j)\eta(h') + dj: 1 \leq j \leq k\} 
\end{align*}
where the last inequality follows from the definition of $A^\eta$ and the observation that $f_jt^{dj} = f_jt^{i_j} \in A^\eta$. It follows that $\eta(h') \leq d$, which implies that $h = h't^d \in A^\eta$, as required to prove that $A^\eta$ is integrally closed. 
\end{proof}

\Cref{integrally-closed} shows that $\Rplus$ is integrally closed. Now we show that $\Rplus$ is finitely generated over $\cc$. Set $q_0 := 0$ and define
\begin{align}
\gplus{0} &:= y -   x^p\\
\gplus{j} &:= x^{q_j}(y - x^p -  \sum_{i=1}^j a_{i} x^{-q_i}),\ 1 \leq j \leq k,\\
\omegapluss{j} &:= \deltaplus(\gplus{j}) 
	= 
	\begin{cases}
	-\omega_1(q_{j+1} - q_j) &\text{if}\ 0 \leq j \leq k-1, \\
	-\omega_2 &\text{if}\ j = k.
	\end{cases}
\end{align}
Note that $\gplus{j} \in \cc[x,y]$ for each $j = 0, \ldots, k$. Let $z_0, \ldots, z_k$ be indeterminates, $S := \cc[x,z_0, \ldots, z_k]$, and $\omegaplus$ be the weighted degree on $S$ corresponding to weights $\omega_1, \omegapluss{0}, \ldots, \omegapluss{k}$ to respectively $x, \zplus{0}, \ldots, \zplus{k}$. Let $\piplus: S \to \cc[x,y]$ be the map that sends $x \mapsto x$ and $z_j \to \gplus{j}$, $0 \leq j \leq k$. Note that 
\begin{align}
\omegaplus(F) \geq \deltaplus(\piplus(F)) \label{omegadeltaplus}
\end{align}
for each $F \in S$. Let $J_+$ be the ideal in $S$ generated by all weighted homogeneous (with respect to $\omegaplus$) polynomials $F \in S$ such that $\omegaplus(F) > \deltaplus(\piplus(F))$. Note that for each $j = 0, \ldots, k-1$,
$$z_jx^{q_{j+1} - q_j} - a_{j+1} \in J_+$$

\begin{claim} \label{J_+_claim}
$J_+$ is a prime ideal of $S$ generated by $z_jx^{q_{j+1} - q_j} - a_{j+1}$, $0 \leq j \leq k-1$.
\end{claim}

\begin{proof}
The fact that $J_+$ is prime is a straightforward consequence of inequality \eqref{omegadeltaplus} and the observations that both $\omegaplus$ and $\deltaplus$ satisfy property \ref{mult-property} of degree-like functions (see \cref{degree-like-section}) with exact {\em equality}. Let $\tilde J_+$ be the ideal of $S$ generated by $z_jx^{q_{j+1} - q_j} - a_{j+1}$, $0 \leq j \leq k-1$. Then  
\begin{align}
S/\tilde J_+\cong \cc[x,x^{-(q_1-q_0)}, \ldots, x^{-(q_k - q_{k-1})}, z_k] \label{S/J_+}
\end{align}
where the isomorphism is that of {\em graded rings}, the grading on both rings being induced by $\omegaplus$. This implies that $\tilde J_+$ is a prime ideal contained in $J_+$. Since $J_+/\tilde J_+$ is a prime {\em homogeneous} (with respect to the grading) of $S/\tilde J_+$, it follows that if $J_+\supsetneqq \tilde J_+$, then $J_+$ contains an element of the form $x^r z_k^s - \alpha$ for some $\alpha \in \cc$ and $(r,s) \in (\zz_{\geq 0})^2 \setminus \{(0,0)\}$. Since this is impossible by definition $J_+$, it follows that $J_+ = \tilde J_+$, as required.
\end{proof}

\begin{claim} \label{surjective-corollary}
For each $f \in \cc[x,y]$, there exists $F \in S$ such that $\piplus(F) = f$ and $\omegaplus(F) = \deltaplus(f)$.
\end{claim}

\begin{proof}
Let $f \in \cc[x,y]$ and $F \in S$ such that $\piplus(F) = f$. Inequality \eqref{omegadeltaplus} implies that $\omegaplus(F) \geq \deltaplus(f)$. Assume w.l.o.g.\ $\omegaplus(F) > \deltaplus(f)$. It suffices to show that there exists $F' \in S$ such that $\piplus(F') = f$ and $\omegaplus(F') < \omegaplus(F)$. Indeed, if $H$ is the leading weighted homogeneous form (with respect to $\omegaplus$) of $F$, then $H \in J_+$. \Cref{J_+_claim} then implies that
$$H =\sum_{j=0}^{k-1} ( z_jx^{q_{j+1} - q_j} - a_{j+1})H_j$$
for some weighted homogeneous $H_0, \ldots, H_{k-1} \in S$. Setting 
$$F' := (F - H) + \sum_{j=0}^{k-1} H_j z_{j+1}$$ 
does the job.
\end{proof}

\begin{cor} \label{deltaplus-cor}
Let $\Gamma := \{(\alpha, \beta_0, \ldots, \beta_k,d) \in (\zz_{\geq 0})^{k+3}: \alpha\omega_1 + \sum_{j=0}^k \beta_j\omegapluss{j} \leq d\}$. Then 
$\cc[x,y]^{\deltaplus} = \cc[x^\alpha \gplus{0}^{\beta_0} \cdots \gplus{k}^{\beta_k}t^d: (\alpha, \beta_0, \ldots, \beta_k,d) \in \Gamma]$. \qed
\end{cor}

Since $\Gamma$ is a finitely generated subsemigroup of $\zz^{k+3}$, \cref{deltaplus-cor} proves assertion \eqref{finite-type-assertion} of \cref{counter-thm}. \qed

\subsection{Proof of assertions \eqref{infinite-type-assertion}--\eqref{infinite-assertion} of \cref{counter-thm}.}
\label{proof2section}
Let $u,v, \xi$ be indeterminates. Let 
\begin{align}
\phi(u,\xi) 
	:= f_+(u^{1/2}) + \xi u^{-\omega_2/(2\omega_1)} 
	=  u^{p/2} + \sum_{j=1}^k a_ju^{-q_j/2} +  \xi u^{-\omega_2/(2\omega_1)}
\end{align}
and $\eta$ be the degree-like function on $\cc[u,v]$ defined as follows:
\begin{align*}
\eta(g(u,v)) = 2\omega_1\deg_u\left(g(u,v)|_{v = \phi(u,\xi)}\right).
\end{align*}
Now consider the map $\cc[u,v] \into \cc[x,y]$ given by $u \mapsto x^2$ and $v \mapsto y$. It is not hard to check that for each $i \in \{+,-\}$, $\delta_i$ is an {\em extension} of $\eta$, i.e.\ $\delta_i$ restricts to $\eta$ on $\cc(u,v)$. Note that $-\eta$ is a {\em discrete valuation} on $\cc[u,v]$ and $-\delta_+, -\delta_-$ are discrete valuations on $\cc(x,y)$. Since the degree of the extension $\cc(x,y)$ over $\cc(u,v)$ is $2$, it follows from \cite[Theorem VI.19]{zsII} that $\delta_1$ and $\delta_2$ are in fact the {\em only} extensions of $\eta$ to $\cc[x,y]$. Let 
$$\delta := \max\{\delta_+, \delta_-\}.$$
\Cref{semi-integral-extension} then implies that $\cc[x,y]^\delta$ is integral over $\cc[u,v]^\eta$.\\

Now note that 
\begin{align*}
v^2|_{v = \phi(u,\xi)} 
	&= u^p +  2a_1u^{(p-q_1)/2} + \cdots + 2a_ku^{(p-q_k)/2} +  2\xi u^{p/2 - \omega_2/(2\omega_1)} + \ldt 
\end{align*}
where $\ldt$ denotes terms with degree in $u$ smaller than 
$$\epsilon := p/2 - \omega_2/(2\omega_1)$$
Note that $\epsilon \geq 0$ due to defining property \eqref{omega-property} of $\omega_1, \omega_2$. Define
\begin{align}
h_j &=
	\begin{cases}
	v^2 - u^p &\text{if}\ j = 0,\\
	h_{j-1} - 2a_ju^{-q_j/2}v &\text{if}\ 1 \leq j \leq k\ \text{and $q_j$ is even,} \\
	h_{j-1} - 2a_ju^{(p-q_j)/2} &\text{if}\ 1 \leq j \leq k\ \text{and $q_j$ is odd.} 
	\end{cases}
\end{align}
It is straightforward to verify that 
\begin{defnlist}
\item \label{h-property-0} $\eta(h_0) > \eta(h_1) >  \cdots > \eta(h_k) = 2\omega_1\epsilon \geq 0$. 
\item $h_k|_{v = \phi(u,\xi)} = 2\xi u^\epsilon + $ terms with degree in $u$ smaller than $\epsilon$.  
\end{defnlist}
It then follows that $u, v, h_0, \ldots, h_k$ is the sequence of {\em key forms} of $\eta$ - see \cref{key-section} for an informal discussion of key forms, and \cite[definition 3.16]{algebraicity} for the precise definition. Property \eqref{even} of $q_1, \ldots, q_k$ implies that $h_k$ is {\em not} a polynomial. This, together with observation \ref{h-property-0} and \cite[theorem 4.13 and proposition 4.14]{with-tim} implies (see \cref{key-results})  that
\begin{defnlist}[resume]
\item \label{finite-type-eta}$\cc[u,v]^\eta$ is not finitely generated over $\cc$,
\item $\eta(f) > 0$ for each $f \in \cc[u,v] \setminus \cc$,
\item if $\epsilon > 0$, then $\{f \in \cc[u,v]: \eta(f) \leq d \}$ is a finite dimensional vector space over $\cc$ for all $d \geq 0$, 
\item \label{infinite-eta} if $\epsilon = 0$, then there exists $d > 0$ such that $\{f \in \cc[u,v]: \eta(f) \leq d \}$ is an infinite dimensional vector space over $\cc$.
\end{defnlist}

Since $R = \cxydelta$ is integral over $\cc[u,v]^\eta$, observations \ref{finite-type-eta}--\ref{infinite-eta} imply assertions \eqref{infinite-type-assertion}--\eqref{infinite-assertion} of \cref{counter-thm}.
\qed

\appendix
\section{Key forms: an informal introduction} \label{key-section}
\subsection{} \label{weighted-section} The simplest of the degree-like functions on $\cc[x,y]$ are {\em weighted degrees}: given a pair of relatively prime integers $(\omega_1, \omega_2) \in \zz^2$, the corresponding {\em weighted degree} $\omega$ is defined as follows: 
\begin{align*}
\omega(\sum_{\alpha, \beta}c_{\alpha,\beta}x^{\alpha}y^\beta)
	&:= \max\{\alpha\omega_1 + \beta\omega_2: c_{\alpha,\beta} \neq 0\}
\end{align*}

Assume $\omega_1$ and $\omega_2$ are positive. Then the weighted degree $\omega$ can also be described as follows: take the one dimensional family of curves $C_\xi := \{(x,y): y^{\omega_1} - \xi x^{\omega_2} = 0\}$ parametrized by $\xi \in \cc$. Each of these curves has {\em one place at infinity}, i.e.\ its closure in $\pp^2$ intersects the line at infinity on $\pp^2$ at a single point, and the germ of the curve is analytically irreducible at that point. Then for each $f \in \cc(x,y)$, $\omega(f)$ is simply the pole of $f|_{C_\xi}$ at the unique point at infinity on $C_\xi$ for generic $\xi \in \cc$. 

\subsection{} \label{depth-2-section} Now consider the family of curves $D_\xi := \{(x,y): y^2 - x^3 - \xi x^2 = 0\}$, again parametrized by $\xi \in \cc$. Each $D_\xi$ also has one place at infinity, and therefore defines a degree-like function $\eta$ on $\cc(x,y)$ defined as in the preceding paragraph: $\eta(f)$, where $f$ is a polynomial, is the pole of $f|_{D_\xi}$ at the unique point at infinity on $D_\xi$ for generic $\xi \in \cc$. Then it is not hard to see that 
\begin{itemize}
\item $\eta(x) = 2$, $\eta(y) = 3$, $\eta(y^2 - x^3) = 4$,
\item Given an expression of the form 
\begin{align}
f = \sum_{\alpha_0, \alpha_1, \alpha_2} c_\alpha x^{\alpha_0}y^{\alpha_1}(y^2 - x^3)^{\alpha_2}
\end{align}
where $0 \leq \alpha_1 < 2$ and $\alpha_2 \geq 0$, one has
\begin{align*}
\eta(f) = \max\{ 2\alpha_0 + 3\alpha_1 + 4\alpha_2: c_\alpha \neq 0\}
\end{align*}
\end{itemize}

\subsection{} \label{general-section}  Both $\omega$ from \ref{weighted-section} and $\eta$ from \ref{depth-2-section} are {\em divisorial semidegrees} on $\cc[x,y]$ - these are degree-like functions $\delta$ on $\cc[x,y]$ such that there is an algebraic compactification $\bar X$ of $\cc^2$ and an irreducible curve $E \subseteq \bar X \setminus \cc^2$ such that for each $f \in \cc[x,y]$, $\delta(f)$ is the pole of $f$ along $E$. For a divisorial semidegree $\delta$, starting with $g_0 := x, g_1 := y$, one can successively form a finite sequence of elements $g_0, \ldots, g_{l+1} \in \cc[x,x^{-1},y]$, $l \geq 0$, such that 
\begin{itemize}
\item for each $i = 1, \ldots, l$, $g_{i+1}$ is a simple `binomial' in $g_0, \ldots, g_i$,
\item $\delta(g_{i+1})$ is smaller than its `expected value', and
\item every polynomial $f$ in $(x,y)$ has an expression in terms of $g_0, \ldots, g_{l+1}$ such that $\delta(f)$ can be computed from that expression from only the knowledge of $\delta(g_0), \ldots, \delta(g_{l+1})$. 
\end{itemize}
The key forms of weighted degrees are simply $x,y$, and the key forms of $\eta$ from \ref{depth-2-section} are $x, y, y^2 - x^3$ (since $\eta(x) = 2$ and $\eta(y) = 3$, the `expected value' of $\eta(y^2 - x^3)$ should have been $6$, whereas its actual value is $4$). 

\subsection{} \label{key-results} A lot of information of a divisorial semidegree $\delta$ can be recovered from its key forms. The results that we use in the proof of \cref{counter-thm} follow from \cite[theorem 4.13 and proposition 4.14]{with-tim}, and are as follows: if $g_{l+1}$ is the last key form of $\delta$, then
\begin{prooflist}
\item The following are equivalent:
\begin{prooflist}
\item $\delta(f) > 0$ for every non-constant polynomial $f$ on $\cc[x,y]$,
\item either $\delta(g_{l+1}) > 0$, or $\delta(g_{l+1}) = 0$ and $g_{l+1}$ is {\em not} a polynomial.
\end{prooflist}
\item The following are equivalent:
\begin{prooflist}
\item $\cxydelta$ is {\em not} finitely generated over $\cc$,
\item $\delta(g_{l+1}) \geq 0$ and $g_{l+1}$ is {\em not} a polynomial.
\end{prooflist}
\item Assume $\cxydelta$ is {\em not} finitely generated over $\cc$. 
\begin{prooflist}
\item if $\delta(g_{l+1}) > 0$, then $L_d := \{f \in \cc[x,y]: \delta(f) \leq d\}$ is a finite dimensional vector space over $\cc$ for each $d \geq 0$. 
\item if $\delta(g_{l+1}) =0$, then there exists $d > 0$ such that $L_d$ is infinite dimensional over $\cc$.
\end{prooflist}
\end{prooflist}

\section{Integral closure of the graded ring of a degree-like function} \label{integral-section}

\begin{defn}
Let $\kk$ be a field and $A$ be a $\kk$-algebra. A degree-like function $\delta$ on $A$ is called a {\em semidegree} if $\delta$ satisfies condition \ref{mult-property} of degree-like functions (see \cref{degree-like-section}) always with an {\em equality}. We say that $\delta$ is a {\em subdegree} if there are finitely many semidegrees $\delta_1, \ldots, \delta_k$ such that for all $f \in A\setminus\{0\}$, 
\begin{align} \label{gensgf-condition}
	\delta(f) = \max\{\delta_1(f), \ldots, \delta_k(f)\}
\end{align}
\end{defn}

\begin{rem}
If $\delta$ is integer-valued on $A\setminus \{0\}$ (i.e.\ $\delta(f) = -\infty$ iff $f = 0$), then $\delta$ is a semidegree iff $-\delta$ is a {\em discrete valuation}. 
\end{rem}

Let $A \subseteq B$ be $\kk$-algebras which are also integral domains. Assume $B$ is integral over $A$ and the quotient field $L$ of $B$ is a finite separable extension of the quotient field $K$ of $A$.

\begin{lemma} \label{semi-integral-extension}
Let $\delta_1, \ldots, \delta_m$ be semidegrees on $A$ which are integer-valued on $A\setminus\{0\}$, and $\delta := \max\{\delta_1, \ldots, \delta_m\}$. For each $i$, $1 \leq i \leq m$, let $\eta_{ij}$, $1 \leq j \leq m_i$, be the extension of $\delta_i$ to $B$. Define $\eta  := \max\{\eta_{ij}, 1 \leq i \leq m, 1 \leq j \leq m_i\}$. If $A$ is integrally closed, then $B^\eta$ is integral over $\adelta$. If in addition $B$ is integrally closed, then $B^\eta$ is the integral closure of $\adelta$ in the quotient field of $B^\eta$. 
\end{lemma}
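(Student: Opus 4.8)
The plan is as follows. Since each $\eta_{ij}$, being an extension of $\delta_i$, restricts to $\delta_i$ on $A$, for $f\in A$ we get $\eta(f)=\max_{i,j}\eta_{ij}(f)=\max_i\delta_i(f)=\delta(f)$; hence $\adelta$ is a graded subring of $B^\eta$, and both sit inside $B[t]$. To prove the first assertion it suffices, since the elements of $B[t]$ that are integral over $\adelta$ form a subring and $B^\eta$ is generated even additively by its homogeneous components, to show that each homogeneous $bt^d\in B^\eta$ (so $b\in B$, $\eta(b)\le d$) is integral over $\adelta$. Let $P(T)=T^N+a_1T^{N-1}+\dots+a_N\in K[T]$ be the minimal polynomial of $b$ over $K$. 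Its roots in an algebraic closure $\bar K$ lie among the roots of any monic $g\in A[T]$ with $g(b)=0$, hence are integral over $A$; so are the $\pm a_\ell$ (their elementary symmetric functions), and since the $a_\ell$ lie in $K$ and $A$ is integrally closed, the $a_\ell$ lie in $A$. The key claim is that $\delta(a_\ell)\le\ell d$ for every $\ell$: granting it, $a_\ell t^{\ell d}\in\adelta$, and multiplying $P(b)=0$ by $t^{Nd}$ exhibits $bt^d$ as a root of the monic polynomial $(bt^d)^N+\sum_{\ell\ge1}(a_\ell t^{\ell d})(bt^d)^{N-\ell}$ over $\adelta$.

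To establish $\delta(a_\ell)\le\ell d$, fix $i$ and let $v_i:=-\delta_i$, a discrete valuation on $K$; fix an extension $\bar v_i$ of $v_i$ to $\bar K$. Each root $b_s$ of $P$ equals $\iota_s(b)$ for a $K$-embedding $\iota_s\colon K(b)\hookrightarrow\bar K$, which extends to an embedding $L\hookrightarrow\bar K$; then $\bar v_i\circ\iota_s$ is a valuation on $L$ restricting to $v_i$ on $K$, so its restriction to $B$ is an extension of $\delta_i$ and hence equals $-\eta_{ij}$ for some $j$. Therefore $\bar v_i(b_s)=-\eta_{ij}(b)\ge-\eta(b)\ge-d$, and since $a_\ell=\pm e_\ell(b_1,\dots,b_N)$, the ultrametric inequality gives $v_i(a_\ell)=\bar v_i(a_\ell)\ge\ell\cdot\min_s\bar v_i(b_s)\ge-\ell d$, i.e.\ $\delta_i(a_\ell)\le\ell d$; maximizing over $i$ then gives $\delta(a_\ell)\le\ell d$.

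For the second assertion, let $\gamma\in L(t)$ be integral over $\adelta$. Then $\gamma$ is integral over $B[t]$, which is integrally closed because $B$ is, so $\gamma\in B[t]$; and since the integral closure of a $\zz_{\ge0}$-graded ring in a graded overring is graded, I may assume $\gamma=ct^d$ is homogeneous with $c\in B$. Extracting the degree-$Nd$ component of an integral dependence relation for $\gamma$ over $\adelta$ produces $a_\ell\in A$ with $\delta(a_\ell)\le\ell d$ and $c^N=-\sum_{\ell\ge1}a_\ell c^{N-\ell}$. Now fix $i,j$ and set $w:=-\eta_{ij}$, a valuation on $L$ extending $v_i$, so $w(a_\ell)=v_i(a_\ell)=-\delta_i(a_\ell)\ge-\ell d$. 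If $w(c)<-d$ then $w(a_\ell c^{N-\ell})\ge-\ell d+(N-\ell)w(c)>Nw(c)$ for all $\ell\ge1$, forcing $Nw(c)=w(c^N)>Nw(c)$; this contradiction shows $w(c)\ge-d$, i.e.\ $\eta_{ij}(c)\le d$. As $i,j$ were arbitrary, $\eta(c)\le d$, so $\gamma=ct^d\in B^\eta$. Together with the first assertion this shows $B^\eta$ is precisely the integral closure of $\adelta$ in $L(t)=\operatorname{Frac}(B^\eta)$ (equivalently, in $B[t]$).

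I expect the main obstacle to be the valuation bookkeeping in the second paragraph: matching the conjugates $b_s$ of $b$ to the extensions $\eta_{ij}$, while taking care that ``extension'' is normalized so that $\eta_{ij}|_A=\delta_i$ (so the $\eta_{ij}$ may take values in $\tfrac1e\zz$ rather than $\zz$) and that $\{\eta_{ij}\}_j$ really exhausts the extensions of $\delta_i$. Apart from this, the only non-formal input is the standard fact that the integral closure of a graded ring in a graded overring is graded; everything else reduces to \ref{additive-prop}--\ref{multiplicative-prop} and the ultrametric inequality.
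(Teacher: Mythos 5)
Your proof is correct, and for the main (first) assertion it follows essentially the same route as the paper: pass to the minimal polynomial of $b$ over $K$, use integral closedness of $A$ to see its coefficients lie in $A$, and bound $\delta_i$ of each coefficient by $\ell d$ by evaluating extensions of $-\delta_i$ on the conjugates of $b$. The only real difference in that part is bookkeeping: you fix one extension $\bar v_i$ of $-\delta_i$ to $\bar K$ and pull it back along the $K$-embeddings $\iota_s$, whereas the paper passes to the Galois closure $L'$ and invokes the conjugacy of all extensions of a discrete valuation (\cite[Theorem VI.12, Corollary 3]{zsII}) to get $\eta'_i(f_j)=\eta'_i(f)$; both hinge on the same fact and on the family $\{\eta_{ij}\}_j$ being the \emph{complete} set of extensions of $\delta_i$, a point you rightly flag. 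Where you genuinely diverge is the second assertion: the paper disposes of it in one line by citing that $B^\eta$ is integrally closed when $B$ is (\cite[Proposition 2.2.7]{pisis}), while you prove the needed containment directly (reduce to a homogeneous integral element $ct^d$, extract the degree-$Nd$ component of the dependence relation, and run the standard ``leading term dominates'' valuation argument for each $\eta_{ij}$). Your version is longer but self-contained, and in effect reproves the cited proposition in this special case; the paper's is shorter at the cost of an external reference. No gaps.
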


\begin{proof}
By construction, the restriction of $\eta$ to $A$ is precisely $\delta$, so that $\adelta \subseteq B^\eta$. The last assertion of the lemma follows from the first by \cref{integrally-closed}. We now demonstrate the first assertion. Let $t$ be an indeterminate. Identify $B^\eta$ with a subring of $B[t]$ as in \eqref{kxdelta}. Let $f \in B \setminus \{0\}$ and $d' := \eta(f)$. It suffices to show that $ft^{d'} \in B^\eta$ satisfies an integral equation over $A^\delta$. Let the {\em minimal polynomial} of $f$ over $K$ be
\begin{align}
P(T) := T^d + \sum_e g_e T^{d-e} \label{P(t)}
\end{align}
and $L'$ be the Galois closure of $L$ over $K$. Since $L'$ is Galois over $K$, it contains all the roots $f_1, \ldots, f_d$ of $P(T)$. Since $L/K$ is finite and separable, each $f_i = \sigma_i(f)$ for some $\sigma_i \in \gal(L'/K)$. For each $i,j$, $1 \leq i \leq m$ and $1 \leq j \leq m_i$, let $\{\eta'_{ijk}: 1 \leq k \leq l_{ij}\}$ be the extensions of $\eta_{ij}$ to $L'$. Define 
\begin{align*}
\eta'_i &:= \max\{\eta'_{ijk}: 1 \leq j \leq m_i,\ 1 \leq k \leq l_{ij}\},\ 1 \leq i \leq m,\ \text{and}\\
\eta' 	&:= \max\{\eta'_i : 1 \leq i \leq m\}.
\end{align*}  
Since each of $\delta_i$, $\eta_{ij}$ and $\eta'_{ijk}$'s is the negative of a discrete valuation, it follows that each $\eta'_{ijk} = \eta'_{i11} \circ \sigma_{ijk}$ for some $\sigma_{ijk} \in \gal(L'/K)$ \cite[Theorem VI.12, Corollary 3]{zsII}. It follows that for all $i, j$,
\begin{align*}
\eta'_i(f_j) &= \max\{\eta'_{i11} \circ (\sigma_{ij'k'} \circ \sigma_j)(f): 1 \leq j' \leq m_i,\ 1 \leq k' \leq l_{ij} \}\\
			 &= \max\{\eta'_{ij''k''}(f): 1 \leq j'' \leq m_i,\ 1 \leq k'' \leq l_{ij} \}\\
			 &= \eta'_i(f).
\end{align*} 
Note that $\eta'_i|_K = \delta_i$ for each $i$. Since each $g_e$ (from \eqref{P(t)}) is an $e$-th symmetric polynomial in $f_1, \ldots, f_d$, it follows that for all $i$, $1 \leq i \leq m$, and all $e$, $1 \leq e \leq d$,
\begin{align}
\delta_i(g_e) = \eta'_i(g_e) \leq e\eta'_i(f). \label{delta_i(g_e)}
\end{align}
Since $\eta'|_B = \eta$, it follows that $d' = \eta(f) = \eta'(f)$. By definition of $\eta'$, there exists $i$, $1 \leq i \leq m$, such that $d' = \eta'_i(f) \geq \eta'_{i'}(f)$ for all $i'$, $1 \leq i' \leq m$. It then follows from \eqref{delta_i(g_e)} that 
\begin{align}
\delta_i(g_e) \leq ed'\quad \text{for all}\ i,\ 1 \leq i \leq m. \label{delta_i(g_e)-again}
\end{align} 
Now recall that $A$ is integrally closed, so that $g_e \in A$ for all $e$ \cite[Proposition 5.15]{am}. Since inequality \eqref{delta_i(g_e)-again} implies that $\delta(g_e) \leq ed'$, it follows that $g_et^{ed'} \in \adelta$ for all $e$. Consequently $ft^{d'}$ satisfies the integral equation 
\begin{align*}
\tilde P(T) := T^d + \sum_e g_e t^{ed'} T^{d-e}
\end{align*}
over $\adelta$. Therefore $B^\eta$ is integral over $\adelta$, as required.
\end{proof}

\bibliographystyle{alpha}
\bibliography{bibi}

\end{document}